\definecolor{darkgreen}{rgb}{0,0.4,0}
\definecolor{BrickRed}{rgb}{0.65,0.08,0}
\newcommand{\OEIS}[1]{\text{\href{https://oeis.org/#1}{{\small \tt #1}}}} 
\newtheorem{theorem}{Theorem}
\newtheorem{corollary}{Corollary}
\newtheorem{lemma}{Lemma}
\def\A{\alpha}
\def\B{\beta}
\def\ord{{\operatorname{ord}}}
\def\lcm{{\operatorname{lcm}}}
\begin{document}
\title{On the period mod $m$ of\\ polynomially-recursive sequences: a case study}

\date{March 5, 2019} 

\newcommand{\addorcid}[1]{\protect\includegraphics[height=3mm]{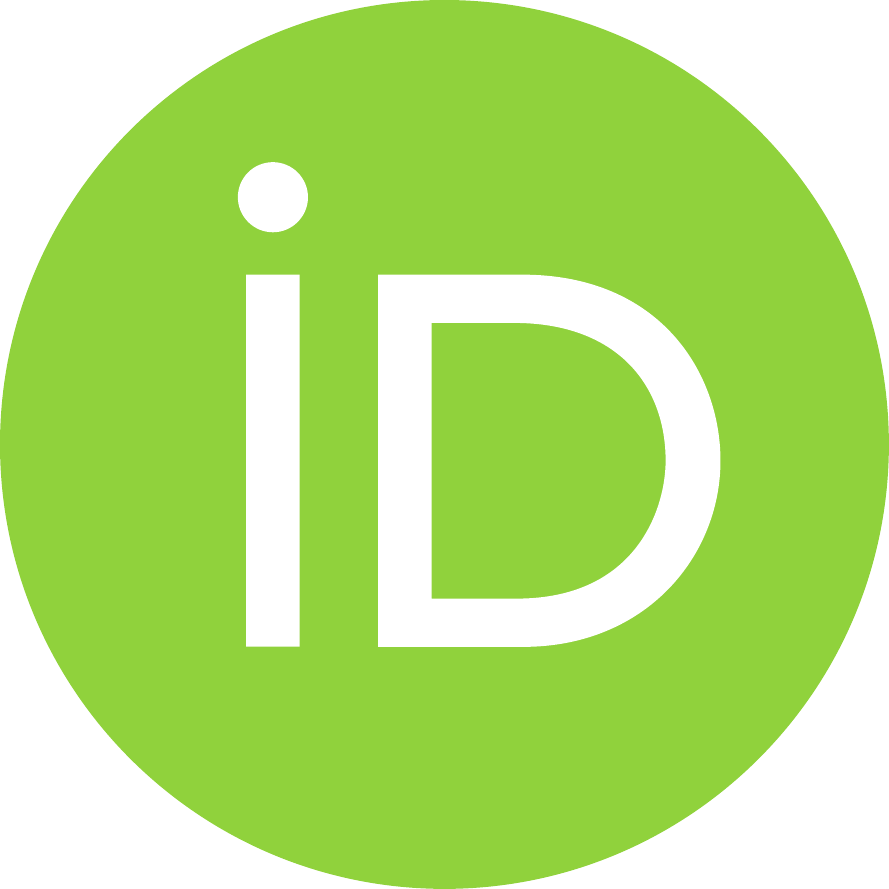} \url{https://orcid.org/#1}}
\author{Cyril Banderier$^1$\thanks{\url{https://lipn.fr/~banderier} \hfill \addorcid{0000-0003-0755-3022} } 
  \and Florian Luca$^{2,3,4}$\thanks{\href{https://scholar.google.com/citations?user=g-eDxl0AAAAJ}{https://scholar.google.com/} \hfill \addorcid{0000-0001-8581-449X} }
}	

\maketitle
\thispagestyle{empty}

\begin{center}
\noindent $^1$: LIPN (UMR CNRS 7030), Universit\'e Paris Nord, France. \\
$^2$: School of Mathematics, University of the Witwatersrand, South Africa.\\
$^3$: King Abdulaziz University, Jeddah, Saudi Arabia.\\
$^4$: Department of Mathematics, University of Ostrava, Czech Republic. \\
\end{center}

\begin{abstract}
Polynomially-recursive sequences generally have a periodic behavior mod $m$.
In this paper, we analyze the period mod $m$ of a second order polynomially-recursive sequence.
The problem originally comes from an enumeration of avoiding pattern permutations
and appears to be linked with nice number theory notions  (the Carmichael function, Wieferich primes, algebraic integers).
We give the mod $a^k$ supercongruences, and generalize these results to a class of recurrences.
\medskip

\bigskip
\noindent\textbf{Keywords and phrases.} D-finite function, Modular properties, P-recursive sequence, supercongruence, Carmichael function, Wieferich primes.

\bigskip
\smallskip
\noindent\textbf{2010 Mathematics Subject Classification.}\, 11B39, 11B50, 11B85, 05A15.
\end{abstract}

\newpage
\section{Introduction}\label{sec1}

In his analysis of sorting algorithms, Knuth introduced the notion of forbidden pattern 
in permutations, which later became a field of interest per se~\cite{Knuth98}. 
By studying the basis of such forbidden patterns for 
permutations reachable with $k$ right-jumps from the identity permutation, 
the authors of~\cite{BanderierBarilMoreiraDosSantos17} discovered that the permutations of size $n$ in this basis 
were enumerated by the sequence of integers $\{b_n\}_{n\ge 0}$  given by $b_0=1,~b_1=0$, \begin{equation}\label{rec}
b_{n+2}=2n b_{n+1}+(1+n-n^2)b_n\qquad {\text{\rm for~all}}\quad n\ge 0.
\end{equation}
This is sequence \OEIS{A265165} in the OEIS\footnote{On-Line Encyclopedia of Integer Sequences, \href{https://oeis.org}{https://oeis.org}.},
it starts like 	0, 1, 2, 7, 32, 179, 1182, 8993, 77440, 744425, 7901410, 91774375\dots

\smallskip
Such a sequence satisfying a recurrence with polynomial coefficients in $n$ is called \mbox{{\em P-recursive}} (for {\em polynomially recursive}), D-finite, or holonomic,
depending on the authors (see e.g.~\cite{Stanley99,FlajoletSedgewick09,Gessel90,PetkovsekWilfZeilberger96}). P-recursive sequences are ubiquitous in combinatorics, number theory, analysis of algorithms, computer algebra, etc.
It is always the case that the corresponding generating function satisfies a linear differential equation, 
but it is not always the case that it has a closed form. The generating function of  $\{b_n\}_{n\ge 0}$  has in fact a nice closed form involving the golden ratio. Indeed, putting 
$$
(\A,\B):=\left(\frac{1+{\sqrt{5}}}{2},\frac{1-{\sqrt{5}}}{2}\right)
$$
for the two roots of the quadratic equation $x^2-x-1=0$, it was shown in \cite{BanderierBarilMoreiraDosSantos17} that the exponential generating function of the $\{b_n\}_{n\ge 0}$,
namely 
\begin{equation}
\label{eq:B}
B(x)=\sum_{n\ge 0} b_n\frac{x^n}{n!},\qquad {\text{satisfies}}\quad B(x)=\frac{-\B}{\A-\B} (1-x)^{\A}+\frac{\A}{\A-\B} (1-x)^{\B}-1.
\end{equation}
This is a noteworthy sequence in analytic combinatorics 
(see~\cite{FlajoletSedgewick09} for a nice presentation of this field), as it is one of the rare sequences exhibiting an irrational exponent in its asymptotics: 
$$
\frac{b_n}{n!}\sim \frac{\A}{ \sqrt{5} \Gamma(\A-1)}   {n^{\A-2}}(1+o(1))\qquad {\text{\rm as}}\quad n\to\infty,
$$
where $\Gamma(z)=\int_0^{+\infty} t^{z-1}\exp(-t)dt$ is the Euler gamma function. 

\pagebreak

There is a vast literature in number theory analyzing the modular congruences of famous sequences (Pascal triangle, Fibonacci, Catalan, Motzkin, Ap\'ery numbers, see~\cite{Gessel84,DeutschSagan06, XinXu11, RowlandZeilberger14, KauersKrattenthalerMueller11}).
The properties of $b_n \mod m$ are sometimes called ``supercongruences'' when $m$ is the power of a prime number: many
articles consider $m=2^r$, or $m=3^r$.
We now restate an important result which holds for any $m$ (not necessarily the power of a prime number).

\begin{theorem}[Supercongruences for D-finite functions, Theorem 7 of \cite{BanderierBarilMoreiraDosSantos17}]\qquad\newline   \label{supercongru}
Consider  any P-recurrence of order $r$: $$P_0(n) u_n = \sum_{i=1}^r  P_i(n) u_{n-i}\,,$$
where the polynomials $P_0(n),\dots, P_r(n)$ belong to  ${\mathbb Z}[n]$,
and where the polynomial $P_0(n)$ is ultimately invertible ${\rm mod\ } m$ (\/{\em i.e.,} $\gcd(P_0(n),m) =1 $ for all $n$ large enough).
Then the sequence $(u_n)$  is eventually periodic\footnote{In the sequel, we will omit the word ``eventually'': a periodic sequence of period $p$ is thus a sequence for which  $u_{n+p} = u_n$ for all large enough $n$. Some authors use the terminology ``ultimately periodic'' instead.}
 ${\rm mod\ } m$. In particular, recurrences such that $P_0(n)=1$ are periodic ${\rm mod\ } m$.
Additionally, the period is always bounded by $m^{2r}$, therefore there is an algorithm to compute it.
\end{theorem}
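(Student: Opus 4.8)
The plan is to recast the recurrence, read modulo $m$, as a \emph{deterministic autonomous finite-state dynamical system}, and then apply the pigeonhole principle.

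The starting observation is that for any polynomial $P\in\mathbb Z[n]$ one has $P(n+m)\equiv P(n)\pmod m$: expanding $(n+m)^k$ by the binomial theorem, every term except $n^k$ is a multiple of $m$. Consequently each residue $P_i(n)\bmod m$, and hence the residue class of the whole coefficient tuple $\bigl(P_0(n),\dots,P_r(n)\bigr)$, depends only on $n\bmod m$. In particular, since $\gcd(P_0(n),m)=1$ for all sufficiently large $n$ and every residue class contains such $n$, we actually get $\gcd(P_0(j),m)=1$ for \emph{every} integer $j$; so $P_0(n)^{-1}\bmod m$ is well defined for all $n$ and depends only on $n\bmod m$. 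Fix $N$ large enough (and effectively bounded) that the recurrence holds for all $n\ge N$; then for $n\ge N$ we may rewrite it as
\begin{equation*}
u_n\equiv \sum_{i=1}^r d_i(n)\,u_{n-i}\pmod m\,,\qquad\text{where}\quad d_i(n)\equiv P_0(n)^{-1}P_i(n)\pmod m\,,
\end{equation*}
and each $d_i(n)$ depends only on $n\bmod m$.

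Now introduce the state vector $V_n:=(u_n,u_{n+1},\dots,u_{n+r-1})\in(\mathbb Z/m\mathbb Z)^r$ for $n\ge N$. Applying the displayed relation at index $n+r$ expresses $u_{n+r}$ as an explicit function of the entries of $V_n$ and of $(n+r)\bmod m$; hence $V_{n+1}=F(n\bmod m,\,V_n)$ for an explicit map $F$. The dynamics is not autonomous yet because of the dependence on $n\bmod m$, so I \emph{augment} the state with the phase: put $S_n:=\bigl(n\bmod m,\,V_n\bigr)\in\mathcal S:=(\mathbb Z/m\mathbb Z)^{r+1}$. Then $S_{n+1}=G(S_n)$ for a single map $G\colon\mathcal S\to\mathcal S$ defined on all of $\mathcal S$ — and it is precisely the invertibility hypothesis on $P_0$ that makes $G$ everywhere defined. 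Since $|\mathcal S|=m^{r+1}<\infty$, among $S_N,S_{N+1},\dots,S_{N+m^{r+1}}$ two coincide: there are $N\le a<b\le N+m^{r+1}$ with $S_a=S_b$, whence (as $G$ is a function) $S_{n+p}=S_n$ for all $n\ge a$, with $p:=b-a\le m^{r+1}$. Reading off the last coordinate, $u_{n+p}\equiv u_n\pmod m$ for all $n\ge a$, so $(u_n)$ is eventually periodic mod $m$ with period $p\le m^{r+1}\le m^{2r}$ (using $r\ge1$); comparing first coordinates also shows $m\mid p$. Because the threshold $N$ and the bound $m^{r+1}$ are explicit, iterating $G$ until the first repeat computes the (pre)period, which gives the algorithmic claim.

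I do not anticipate a real difficulty: the theorem is, at bottom, ``finite-state machine $+$ pigeonhole'', and the quantitative bound drops out of the size of $\mathcal S$. The two points that require care are (i) carrying the phase $n\bmod m$ inside the state — without it the transition map is not autonomous and the pigeonhole count fails — and (ii) observing that $G$ need not be injective, so what one gets is \emph{eventual} periodicity (the orbit is $\rho$-shaped), matching the convention fixed in the footnote. When $P_0\equiv1$ the invertibility hypothesis is vacuously satisfied, so the conclusion holds with no hypothesis at all, as noted in the statement.
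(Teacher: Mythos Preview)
The paper does not contain a proof of this theorem: it is stated there as a quotation of Theorem~7 of \cite{BanderierBarilMoreiraDosSantos17}, with no argument reproduced. So there is no ``paper's own proof'' to compare against.

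That said, your argument is correct and is the standard one. The two essential moves --- (i) observing that $P_i(n)\bmod m$ depends only on $n\bmod m$, so the ``eventually invertible'' hypothesis upgrades to invertibility for every $n$, and (ii) augmenting the $r$-tuple state by the phase $n\bmod m$ to obtain an autonomous map on a set of size $m^{r+1}$ --- are exactly what is needed, and your pigeonhole then gives eventual periodicity with period at most $m^{r+1}$. This is in fact sharper than the bound $m^{2r}$ recorded in the statement (they coincide only at $r=1$), and your side remark that $m\mid p$ is a genuine bonus. Your caveat that $G$ need not be injective, whence only \emph{eventual} periodicity is guaranteed, is also well placed.
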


\smallskip

N.B.: It is not always the case that P-recursive sequences are periodic mod $p$. E.g.,
it was proven in~\cite{KlazarLuca05} that Motzkin numbers are not periodic mod $m$,
and it seems that
$$(n+3) (n+2) u_n=8 (n-1)(n-2) u(n-2)+(7 n^2+7 n-2) u(n-1)\,, \quad u_0=0, u_1=1\,,$$
is also not periodic mod $m$, for any $m>2$
(this P-recursive sequence counts a famous class of permutations, namely, the Baxter permutations).
This is coherent with Theorem~\ref{supercongru},
as  the leading term in the recurrence (the factor $(n+3)(n+2)$)  is not invertible mod $m$, for infinitely many $n$.

\bigskip

For our sequence $\{b_n\}_{n\ge 1}$ (defined by recurrence~\eqref{rec}),
this theorem explains the periodic behavior of $b_n {\ \rm mod\ } m$.
By brute-force computation, we can get $b_n {\ \rm mod\ } m$, for any given $m$.
For example $b_n {\ \rm mod\ } 15$ is periodic of period $12$:
$$\{b_n {\ \rm mod\ }  15\}_{n\ge 9}= (10, 5, 10, 10, 0, 10, 5, 10, 5, 5, 0, 5)^\infty.$$
The period can be quite large, for example $b_n {\ \rm mod\ } 3617$ has period $26158144$.
More generally, for every positive integer $m$, the sequence $\{b_n {\ \rm mod\ }  m \}_{n\ge 1}$ is eventually periodic: there exist
$T_m>0$ and $n_m$ such that, for all $n\ge n_m$, one has $b_{n+T_m}\equiv b_n\pmod {m}$. 
We write $T_m$ for the smallest such period. In this paper, we study some of the properties of  $\{T_m\}_{m\ge 1}$.  

This is sequence \OEIS{A306699} in the OEIS,
here are its first values $T_2,\dots,T_{100}$:\\ {\footnotesize
2, 12, 8, 1, 12, 84, 8, 36, 2, 1, 24, 104, 84, 12, 16, 544, 36, 1, 8, 84, 2, 1012, 24, 1, 104, 108, 168, 1, 12, 1, 32, 12, 544, 84, 72, 2664, 2, 312, 8, 1, 84, 3612, 8, 36, 1012, 4324, 48, 588, 2, 1632, 104, 5512, 108, 1, 168, 12, 2, 1, 24, 1, 2, 252, 64, 104, 12, 2948, 544, 3036, 84, 1, 72, 10512, 2664, 12, 8, 84, 312, 1, 16, 324, 2, 13612, 168, 544, 3612, 12, 8, 1, 36, 2184, 2024, 12, 4324, 1, 96, 18624, 588, 36, 8.}\\
\noindent Do you detect the hidden patterns in this sequence? 
This is what we tackle in the next section.

\newpage
\section{Periodicity mod $m$, supercongruences and links with number theory}

Our main result is the following.

\begin{theorem}\label{theo1}
Let $b_n$ be the sequence defined by the recurrence of Formula~\ref{rec}.  The period $T_m$ of this sequence $b_n$ mod $m$ satisfies:
\begin{itemize}
\item[a)]
 If $m=p_1^{e_1}\ldots p_k^{e_k}$ (where $p_1,\ldots,p_k$ are distinct primes), then\footnote{As usual, lcm stands for the {\em least common multiple}.} 
$$
T_m=\lcm(T_{p_1^{e_1}},\ldots,T_{p_k^{e_k}}).
$$
\item[b)]
We have $T_m=1$ if and only if $m$ is the product of primes $p\equiv0,1,4\pmod 5$.
\item[c)] 
For every prime $p$, we have $T_p\mid 2p\, \ord_5(p)$  (and thus $T_p\mid 2p(p-1)$).
\item[d)] 
If $T_m>1$ then $T_m$ is even (and a multiple of 4 if $m$ is prime).
\item[e)] 
For $m\geq 3$, we have $T_m=2$ if and only if $m$ is even and $\frac{m}{2}$ is the product of primes $p\equiv 0,1,4\pmod 5$.
\item[f)]
For any prime $p$ not $0,1,4\pmod 5$, we have $T_{p^k}\mid 2p^k (p-1)$.
\end{itemize}
\end{theorem}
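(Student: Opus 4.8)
The plan is to work in the ring $R:=\mathbb{Z}[\A]=\mathbb{Z}[x]/(x^2-x-1)$, the ring of integers of $\mathbb{Q}(\sqrt5)$, where $b_n$ has a Fibonacci-like closed form in terms of \emph{rising factorials}. Writing $\A^{\overline m}:=\A(\A+1)\cdots(\A+m-1)$ (with $\A^{\overline0}=1$), one checks that both $\A^{\overline{n-1}}$ and $\B^{\overline{n-1}}$ satisfy~\eqref{rec} — the verification uses only $\A^2=\A+1$ and $\B^2=\B+1$ — so, matching initial values,
\[
b_n=\frac{\A^{\overline{n-1}}-\B^{\overline{n-1}}}{\sqrt5}\qquad\text{for all }n\ge1,
\]
which is also what~\eqref{eq:B} gives upon extracting $[x^n]$ from $(1-x)^{\A}$ and $(1-x)^{\B}$. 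Now $p\not\equiv0,1,4\pmod5$ says exactly that $x^2-x-1$ is irreducible mod $p$ (i.e. $5$ is a non-residue and $p\ne5$), so $p$ is inert in $R$: $R/p^kR$ is a finite local ring with residue field $\mathbb{F}_{p^2}$ and maximal ideal $(p)$, and $\sqrt5=2\A-1$ as well as every $\A+j$ ($j\in\mathbb Z$) is a unit mod $p^k$. In particular $b_n\bmod p^k$ is \emph{purely} periodic, so it suffices to exhibit one period $T$ with $T\mid 2p^k(p-1)$; then $T_{p^k}\mid T$.

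Put $Q:=\A^{\overline{p^k}}=\prod_{r=0}^{p^k-1}(\A+r)\in R$. For any $n$ the integers $n,n+1,\dots,n+p^k-1$ form a complete residue system mod $p^k$, hence $\A^{\overline{n+p^k}}\equiv Q\,\A^{\overline n}\pmod{p^kR}$, and iterating, $\A^{\overline{n+sp^k}}\equiv Q^s\A^{\overline n}\pmod{p^kR}$; likewise with $\B$ and $\bar Q$ in place of $\A$ and $Q$. Consequently, once we prove
\[
Q^{\,2(p-1)}\equiv1\pmod{p^k}
\]
(which also gives $\bar Q^{\,2(p-1)}\equiv1$, since $p^kR$ is conjugation-stable), taking $T:=2p^k(p-1)=p^k\cdot2(p-1)$ yields $\sqrt5\,b_{n+T}=\A^{\overline{n+T-1}}-\B^{\overline{n+T-1}}\equiv Q^{2(p-1)}\A^{\overline{n-1}}-\bar Q^{2(p-1)}\B^{\overline{n-1}}\equiv\A^{\overline{n-1}}-\B^{\overline{n-1}}=\sqrt5\,b_n\pmod{p^kR}$ for all $n\ge1$; dividing by the unit $\sqrt5$ and using $p^kR\cap\mathbb Z=p^k\mathbb Z$ gives $b_{n+T}\equiv b_n\pmod{p^k}$.

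It remains to establish the supercongruence for $Q$, and here I would treat $p$ odd first. By induction on $k\ge1$ I would prove the polynomial congruence
\[
\prod_{r=0}^{p^k-1}(x+r)\equiv(x^p-x)^{p^{k-1}}\pmod{p^k}\quad\text{in }\mathbb Z[x]:
\]
the case $k=1$ is the classical $\prod_{r=0}^{p-1}(x+r)\equiv x^p-x\pmod p$, and for the step one splits the length-$p^k$ block into $p$ sub-blocks of length $p^{k-1}$, writes the product as $\prod_{j=0}^{p-1}G(x+jp^{k-1})$ with $G(x)=\prod_{r=0}^{p^{k-1}-1}(x+r)$, Taylor-expands $G(x+jp^{k-1})\equiv G(x)+jp^{k-1}G'(x)\pmod{p^{2(k-1)}}$, multiplies out, and uses $\sum_{j=0}^{p-1}j=\tfrac{p(p-1)}2\equiv0\pmod p$ (exactly where oddness of $p$ enters) to kill the cross term, leaving $G(x)^p\equiv(x^p-x)^{p^{k-1}}\pmod{p^k}$ by the induction hypothesis. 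Evaluating at $x=\A$: since Frobenius sends $\A$ to its conjugate $\B$ mod $p$, the element $w:=\A^p-\A$ satisfies $w\equiv\B-\A=-\sqrt5\pmod{pR}$, and the supercongruence gives $Q\equiv w^{p^{k-1}}\pmod{p^k}$. Hence $Q^{2(p-1)}\equiv\bigl(w^{2(p-1)}\bigr)^{p^{k-1}}\pmod{p^k}$, where $w^{2(p-1)}\equiv(-\sqrt5)^{2(p-1)}=5^{\,p-1}\equiv1\pmod p$ by Fermat; thus $w^{2(p-1)}\in1+pR/p^kR$, a group of exponent dividing $p^{k-1}$ for $p$ odd, so its $p^{k-1}$-th power is $1$ and $Q^{2(p-1)}\equiv1\pmod{p^k}$, as wanted.

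Finally, the prime $p=2$ (which is $\equiv2\pmod5$, hence inert, with $2(p-1)=2$) has to be handled apart, because the displayed supercongruence fails at $2$ (there $\sum_{j=0}^{p-1}j=1\not\equiv0$). One argues directly on $Q_k:=\A^{\overline{2^k}}$: splitting the length-$2^k$ block into two halves gives $Q_k\equiv Q_{k-1}^2\pmod{2^{k-1}}$, and with the base values $Q_1=2\A+1\equiv1\pmod2$ and $Q_2=\A^{\overline4}\equiv-1\pmod4$ a short induction gives $Q_k\equiv\pm1\pmod{2^{k-1}}$, hence $Q_k^{2}\equiv1\pmod{2^k}$, i.e. the period $2^{k+1}=2\cdot2^k(p-1)$. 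I expect the main obstacle to be guessing and proving the precise shape of the supercongruence $\prod_{r=0}^{p^k-1}(x+r)\equiv(x^p-x)^{p^{k-1}}\pmod{p^k}$ and noticing that $p=2$ lies outside it; the rest — the $\mathbb{Z}[\A]$ closed form, the reduction of periodicity to a congruence on the block-product $Q$, and the fact that a $2(p-1)$-th power followed by a $p^{k-1}$-th power annihilates a unit that is $\pm\sqrt5$ modulo $p$ — is routine.
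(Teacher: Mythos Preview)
Your proposal addresses only part~f) of the theorem (with c) falling out as the special case $k=1$); parts a), b), d), e) are not touched at all, so as a proof of the full statement it is incomplete. In particular b) (the ``split'' case $p\equiv0,1,4\pmod5$, where one must show $p^r\mid b_n$ eventually) and d) (the lower bound $4\mid T_p$, which needs a determinant/non-vanishing argument) require separate ideas that do not appear in your write-up.

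For part~f) itself, your argument is correct and follows essentially the same route as the paper. Both hinge on the polynomial supercongruence
\[
\prod_{r=0}^{p^k-1}(x+r)\equiv(x^p-x)^{p^{k-1}}\pmod{p^k}\qquad(p\text{ odd}),
\]
proved by the same block-splitting induction using $\sum_{j=0}^{p-1}j\equiv0\pmod p$, then evaluated at $\alpha$ with $\alpha^p\equiv\beta\pmod p$ for inert $p$; and both handle $p=2$ by a direct induction on the block product. The remaining differences are cosmetic: you work with the rising-factorial closed form $b_n=(\A^{\overline{n-1}}-\B^{\overline{n-1}})/\sqrt5$, which is exactly the paper's falling-factorial formula after the substitution $\alpha-k=-(\beta+k-1)$ coming from $\alpha+\beta=1$; and where the paper computes the length-$2p^k$ block as $5^{p^{k-1}}$ and invokes Euler's theorem $5^{\varphi(p^k)}\equiv1\pmod{p^k}$, you equivalently argue that $Q^{2(p-1)}$ lies in $1+pR/p^kR$ and then use that this group has exponent dividing $p^{k-1}$ for odd $p$. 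Your observation that the sequence is \emph{purely} periodic mod $p^k$ (because $1+n-n^2$ is a unit mod $p$ when $p$ is inert, so the recurrence runs backward) is a nice touch the paper does not make explicit.
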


The function $T_m$ thus shares some similarities with the Carmichael function introduced in~\cite[p.~39]{Carmichael14},
and it is expected that its asymptotic behavior is also similar (following e.g.~the lines of~\cite{ErdosPomeranceSchmutz91}).
In this article, we focus on the rich arithmetic properties of this function.  
Note that it allows to compute $T_m$ in a much faster way than the 
brute-force algorithm mentioned  in Section~\ref{sec1}:
the complexity goes from $m^{2r}$ via brute-force to 
e.g.~$\ln(m)^3$ via Shor's algorithm
(or some other sub-exponential complexity in $\ln(m)$ with other efficient algorithms).

\bigskip
{\bf Proof of Part a)}. The proof will use a little preliminary result and the following definition. 
We call $T_m$ the ``eventual period of the sequence mod $m$'', or for short with a
slight abuse of terminology, the ``period of the sequence mod $m$'' (even if the sequence starts with some terms which does not satisfy the periodic pattern).  
The following lemma holds for all eventually periodic sequences of integers.

\begin{lemma}
$T_m$ divides all other periods of $\{u_n\}_{n\ge 0}$ modulo $m$. 
\end{lemma}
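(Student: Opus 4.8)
The plan is to prove the standard fact that, among all eventual periods of an eventually periodic integer sequence, the minimal one divides every other. I would phrase everything in terms of the reduced sequence $v_n := u_n \bmod m$, which takes values in the finite set $\{0,1,\dots,m-1\}$, and I would work with the tail of the sequence where periodicity has actually kicked in.

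First I would set up notation: let $T = T_m$ be the smallest eventual period and let $n_0$ be an index such that $v_{n+T} = v_n$ for all $n \ge n_0$; let $S$ be any other eventual period, with $v_{n+S} = v_n$ for all $n \ge n_1$. Put $N = \max(n_0, n_1)$, so both relations hold simultaneously for $n \ge N$. The key step is a division-with-remainder argument: write $S = qT + r$ with $0 \le r < T$. For every $n \ge N$ one has $v_{n+S} = v_{n+qT+r} = v_{n+r}$ (iterating the period-$T$ relation $q$ times, which is legitimate since all the shifted indices stay $\ge N$), and also $v_{n+S} = v_n$; hence $v_{n+r} = v_n$ for all $n \ge N$. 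Thus $r$ is itself an eventual period of $\{v_n\}$. By minimality of $T$ we cannot have $0 < r < T$, so $r = 0$, i.e. $T \mid S$. This is exactly the claim.

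There is one subtlety worth spelling out, namely why $r$ being an eventual period forces $r=0$ rather than $r$ being some smaller positive period (which would contradict nothing a priori). The point is that the definition of $T_m$ is that it is the smallest \emph{positive} eventual period; a ``period'' of $0$ is vacuous (the relation $v_{n+0}=v_n$ is trivially true), so $0$ is not in competition for the minimum. Hence any eventual period $r$ with $r < T$ must be $0$. I should state the running convention clearly — that $T_m$ denotes the least \emph{positive} period — to make this airtight. This is the only place where care is needed; the rest is routine.

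I do not expect any real obstacle here: the argument is the classical proof that the set of periods of an eventually periodic sequence is exactly the set of multiples of the minimal one (equivalently, it is a subgroup-like structure, $T\mathbb{Z}_{\ge 0}$, closed under the Euclidean algorithm). The only thing to be careful about is bookkeeping with the starting index $N$: one must check that when iterating the period-$T$ shift $q$ times to go from $n+S$ down to $n+r$, every intermediate index $n+r+jT$ for $j=0,\dots,q$ is $\ge N$, which holds because $n \ge N$ and all the added quantities are nonnegative. With that observed, the proof is complete in a few lines.
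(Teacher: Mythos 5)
Your proof is correct, but it takes a slightly different route from the paper's. You use the division algorithm: writing an arbitrary eventual period $S$ as $S=qT+r$ with $0\le r<T$, you iterate the period-$T$ relation forward $q$ times to conclude $v_{n+r}=v_n$ for all large $n$, so $r$ is an eventual period and minimality forces $r=0$, i.e.\ $T\mid S$. The paper instead argues by contradiction via B\'ezout's identity: assuming some period $b$ is not a multiple of $a=T_m$, it writes $d=\gcd(a,b)=Aa+Bb$ and shows $d<a$ is again a period. The two arguments are of the same elementary flavour, but yours has a small technical advantage: since $q\ge 0$ and $r\ge 0$, all shifts are forward and the only bookkeeping needed is $n\ge N=\max(n_0,n_1)$, whereas the B\'ezout coefficients $A,B$ can be negative, which is why the paper must enlarge the threshold to $\max\{n_a,n_b\}+|A|a+|B|b$ before applying the two periodicity relations (and, implicitly, use that $u_{n+a}\equiv u_n$ also allows shifting indices downward as long as one stays above the threshold). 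Conversely, the paper's gcd formulation makes explicit the slightly stronger structural fact that the gcd of any two eventual periods is again an eventual period; your argument yields the same conclusion (the set of eventual periods is exactly the set of positive multiples of $T_m$) but reaches it through the remainder rather than the gcd. Your handling of the one delicate point — that the remainder $r$, if positive, would contradict the minimality of the least positive period, while $r=0$ is not a competing ``period'' — is stated explicitly and correctly, so there is no gap.
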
 

\begin{proof}
Let $T_m=a$ and assume there is $b$ (not a multiple of $a$) which is also a period modulo $m$. Thus, there are $n_a,~n_b$ such that $u_{n+a}\equiv u_n\pmod m$ for all $n>n_a$ and 
$u_n\equiv u_{n+b}\pmod m$ for all $n>n_b$. Let $d=\gcd(a,b)$. 
By B{\'e}zout's identity, one has then $d=Aa+Bb$ for some integers $A,~B$. Let $n_{a,b}=\max\{n_a,n_b\}+|A|a+|B|b$ 
and assume that $n>n_{a,b}$. Then  $u_{a+d}=u_{n+Aa+Bb}\equiv u_{(n+Aa)+bB}\pmod m\equiv u_{n+Aa}\pmod m\equiv u_{n}\pmod m$ so $d<a$ is a period of $\{u_n\}_{n\ge 0}$ modulo $m$, contradicting 
the minimality of $a$. 
\end{proof}

\pagebreak
An immediate consequence is the following\footnote{We use the notation $[m_1,\dots,m_r]=\lcm(m_1,\dots,m_r)$ for the least common multiple of integers $m_1,\dots,m_r$.}:

\begin{corollary}
\label{cor1}
We have $T_{[m_1,\ldots,m_r]}=[T_{m_1},\ldots,T_{m_r}]$.
\end{corollary}
\begin{proof}
First consider $r=2$, and let $a:=m_1$, $b:=m_2$.
Since $[T_a,T_b]$ is a multiple of both $T_a$ and $T_b$, it follows that it is a period of $\{u_n\}_{n\ge 0}$ modulo both $a$ and $b$, so modulo $[a,b]$. It remains to prove that it is the minimal one. To this aim, suppose that $T_{[a,b]}<[T_a,T_b]$.
Then either $T_a\nmid T_{[a,b]}$ or $T_b\nmid T_{[a,b]}$. Since the two cases are similar, we only deal with the first one. 
In this  case we would have that both $T_a$ and $T_{[a,b]}$ would be  periods modulo~$a$.
By the previous lemma, this would force $\gcd(T_a,T{[a,b]})<T_a$, which would obviously be a contradiction. 
Now, a trivial induction on the number $r\ge 2$ gives that
$$
T_{[m_1,\ldots,m_r]}=[T_{m_1},\ldots,T_{m_r}]
$$
holds for all positive integers $m_1,\ldots,m_r$. 
\end{proof}

In particular Part a) of Theorem~\ref{theo1} holds: $T_m=\lcm(T_{p_1^{e_1}},\ldots,T_{p_k^{e_k}})$.  Let us now tackle the proofs of Parts b)--f).

\bigskip
{\bf Proof of Part b).} We use the generating function \eqref{eq:B}, which tells us that
\begin{equation}
[x^n] B(x)   =  \frac{b_n}{n!}=\frac{(-1)^n}{\sqrt{5}}\left(\A\binom{\B}{n}-\B \binom{\A}{n}\right).
\end{equation}
Thus, 
\begin{equation}
b_n =  \frac{(-1)^{n-1}}{\sqrt{5}} \left(\B \A(\A-1)\cdots (\A-(n-1))-\A \B(\B-1)\cdots (\B-n+1)\right).
\end{equation}
By Fermat's little theorem, 
\begin{equation}
\label{eq:X}
\prod_{k=0}^{p-1} (X-k)=X^p-X\pmod p.
\end{equation}
Assume now that $p\equiv 1,4\pmod 5$. Then 
$$
\prod_{k=0}^{p-1} (\A-k)\equiv \A^p-\A\pmod p\equiv 0\pmod p,
$$
where for the last congruence we used the law of quadratic reciprocity:
since $p\equiv 1,4\pmod 5$, we have $${\left(\frac{5}{p}\right)=\left(\frac{p}{5}\right)=1},$$ where 
 ${\displaystyle{\left(\frac{\bullet}{p}\right)}}$ is the Legendre symbol. Thus, 
\begin{equation}
\label{eq:alpha}
\A^p=\left(\frac{1+{\sqrt{5}}}{2}\right)^p\equiv \frac{1+{\sqrt{5}}\cdot 5^{(p-1)/2}}{2^p}\pmod p\equiv \A \pmod p,
\end{equation}
because $5^{(p-1)/2}\equiv {\displaystyle{\left(\frac{5}{p}\right)}}\equiv 1\pmod p$ by Euler's criterion. 
\clearpage

In the above and in what follows, for two algebraic integers $\delta,~\gamma$ and an integer $m$ we write $\delta\equiv \gamma\pmod m$ if the number $(\delta-\gamma)/m$ is an algebraic integer.
This shows that
$$
\frac{1}{p} \prod_{k=0}^{p-1} (\A-k)
$$
is an algebraic integer. The same is true with $\A$ replaced by $\B$. Now take $r\ge 1$ be any integer and take $n\ge pr$. Then, for each $\ell=0,1,\ldots,r-1$, we have that both
$$
\frac{1}{p}\prod_{k=0}^{p-1} (\A-(p\ell+k))\quad {\text{\rm and }}\qquad \frac{1}{p}\prod_{k=0}^{p-1} (\B-(p\ell+k))
$$
are algebraic integers. Thus, if $n\ge pr$, then 
$$
\frac{{\sqrt{5}} b_n}{p^r}=(-1)^{n-1}\left(\B\prod_{\ell=0}^{r-1}\prod_{k=0}^{p-1} (\A-(p\ell+k))\prod_{k=pr}^{n-1} (\A-k)-\A\prod_{\ell=0}^{r-1}\prod_{k=0}^{p-1} (\B-(p\ell+k))\prod_{k=pr}^{n-1} (\B-k)\right)
$$
is an algebraic integer. Thus, $5b_n^2/p^{2r}$ is an algebraic integer and a rational number, so an integer. Since $p\ne 5$, it follows that $p^{2r}\mid b_n^2$, so $p^r\mid b_n$ for $n\ge pr$. This shows that
$T_{p^r}=1$ for all such primes $p$ and positive integers $r$. The same is true for $p=5$. There we use that $\A-3={\sqrt{5}}\B$, so ${\sqrt{5}}\mid \A-3$. Thus, if $n\ge 10r$, we have that 
$$
\prod_{k=1}^n (\A-k)\quad {\text{\rm is~a~multiple~of}}\quad \prod_{\ell=0}^{2r-1} (\A-(3+5\ell))\quad {\text{\rm in}}\quad {\mathbb Z}[(1+{\sqrt{5}})/2],
$$
which in turn is a multiple of $5^r={\sqrt{5}}^{2r}$ in ${\mathbb Z}[(1+{\sqrt{5}})/2]$. Thus, if $n\ge 10r$, then $5^r\mid b_n$. This shows that also $T_{5^r}=1$ and in fact, 
$m\mid b_n$ for all $n>n_m$ if $m$ is made up only of primes $0,1,4\pmod 5$. This finishes the proof of b).

\bigskip
{\bf Proof of Part c).} The claim is satisfied for $p=2$,
as $\{b_n\ {\rm mod\ } 2\}_{n\geq 0}=(1,0)^\infty$, thus $T_2=2 |4$.
Now consider $p>2$. 
Evaluating Formula~\eqref{eq:X} at~$\A=\frac{1+\sqrt{5}}{2}$, one has
$$
\prod_{k=0}^{p-1} (\A-k)\equiv \A^p-\A\pmod p. 
$$
Since $5^{(p-1)/2}\equiv -1\pmod p$, the argument from \eqref{eq:alpha} shows that $\A^p\equiv \B\pmod p$. Thus
$$
\prod_{k=1}^{2p} (\A-k)=\prod_{k=1}^p (\A-k) \prod_{k=p+1}^{2p} (\A-k)\equiv (\B-\A)^2\pmod p\equiv 5\pmod p.
$$
The same is true for $\A$ replaced by $\B$. Thus, it follows that for $n>2p$, we have
\begin{eqnarray*}
b_{n+2p} & = & \frac{(-1)^{n+2p-1}}{{\sqrt{5}}} \left(\B \prod_{k=0}^{n+2p-1} (\A-k)-\A \prod_{k=0}^{n+2p-1} (\B-k)\right)\\
& \equiv & \frac{(-1)^{n-1}}{{\sqrt{5}}} 5 \left(\B\prod_{k=0}^{n-1} (\A-k)-\A \prod_{k=0}^{n-1} (\B-k)\right)\pmod p\\
& \equiv &  5b_n\pmod p.
\end{eqnarray*}  
Applying this $k$ times, we get
$$
b_{n+2pk}\equiv 5^k b_n\pmod p.
$$
Taking $k=p-1$ and applying Fermat's little theorem $5^{p-1}\equiv 1\pmod p$, 
we get $T_p\mid 2p(p-1)$.
In fact, taking $k=\ord_p(5)$,
 where $\ord_p(5)$ is the order of $5$ modulo $p$ (the smallest $k>0$ such that $5^k \equiv 1\pmod p$)
gives the slightly stronger claim: $T_p\mid 2p\, \ord_p(5)$.

\bigskip
{\bf Proof of Part d).}
There are more things to learn from the above argument. We first prove by contradiction
the second claim of d): $4|T_p$, for a prime $p$ such that $T_p>1$.
Assume 
$\nu_2(T_p)<2$,
where $\nu_q(a)$ is the exponent of $q$ in the factorization of $a$. That is, $T_p$ is either odd or $2$ times an odd number. Since $T_p\mid 2p(p-1)$, it follows that if we write $p-1=2^a k$, where $k$ is odd, then $T_p\mid 2pk$. Thus,
\begin{equation}\label{eqmodp}
b_{n}\equiv b_{n+2pk}\equiv 5^k b_n\pmod p
\end{equation}
for all $n>n_p$. Since $5$ is not a quadratic residue,  it follows that $5^k\not\equiv 1\pmod p$ (since $-1\equiv 5^{(p-1)/2}\equiv (5^k)^{2^{a-1}}\pmod p$). So, the above congruence~\eqref{eqmodp} implies that $p\mid (5^k-1)b_n$ but
$p\nmid 5^k-1$, so $b_n\equiv 0\pmod p$ for all large $n$. Take $n$ and $n+1$ and rewrite the information that $b_n\equiv b_{n+1}\equiv 0\pmod p$ in ${\mathbb Z}[\A]/p{\mathbb Z}[\A]$ as 
\begin{eqnarray*}
b_n=\B \prod_{k=0}^{n-1} (\A-k)-\A \prod_{k=0}^{n-1} (\B-k) & \equiv &  0\pmod p \,,\\
b_{n+1}=\B \left(\prod_{k=0}^{n-1} (\A-k)\right)(\A-n)-\B\left(\prod_{k=0}^{n-1}(\B-k)\right)(\B-n) & \equiv & 0\pmod p.
\end{eqnarray*}
We treat this as a linear system in the two unknowns 
$$
(X,Y)=\left(\B\prod_{k=0}^{n-1}(\A-k),\A \prod_{k=0}^{n-1} (\B-k)\right)
$$ 
in the field with $p^2$ elements ${\mathbb Z}[\A]/p{\mathbb Z}[\A]$. This is homogeneous. None of $X$ or $Y$ is $0$ since $p$ cannot divide $\B\prod_{k=0}^{n-1}(\A-k)$. Thus, it must be that the determinant of the above 
matrix is $0$ modulo $p$, but this is 
$$
\left|\begin{matrix}1 & -1 \\ \A-n & -(\B-n)\end{matrix}\right|={\sqrt{5}},
$$
which is invertible modulo $p$. Thus, indeed, it is not possible that $b_n$ and $b_{n+1}$ is a multiple of $p$ for all large $n$, getting a contradiction. 
This shows that $T_p$ is a multiple of $4$. 

\bigskip
{\bf Proof of Part e) (and first claim in Part d).}
Now let $m$ which is not like in b), i.e.~one has at least one prime $p\equiv 2,3\pmod 5$ such that $p\mid m$. Then $4\mid T_p$ by what we have done above, and so $4\mid T_m$ by a). Thus, such $m$ cannot participate in
the situations described either at  d) or e). Further, one has $T_4=8$ as 
$\{b_n \ {\rm mod}\ 4 \}_{n\ge 0} = (1,0,1,2,3,0,3,2)^\infty$.
Thus, if $4\mid m$, then $8\mid T_m$. Hence, if $T_m=2$, then the only possibility is that $2\mid m$ and $m/2$ is a product of primes congruent to $0,1,4$ modulo $5$.
Conversely, if $m$ has such structure then $T_m=2$ by a) and the fact that $T_2=2$ and $T_{p^r}=1$ for all odd prime power factors $p^r$ of $m$. 
This ends the proof of e) and d). 

{\bf Proof of Part f).} Finally, f) is based on a slight generalization of \eqref{eq:X} namely
\begin{equation}
\label{eq:ptor}
\prod_{k=0}^{p^r-1} (X-k)\equiv (X^p-X)^{p^{r-1}}\pmod {p^r}
\end{equation}
valid for all odd primes $p$ and $r\ge 1$. Let us prove \eqref{eq:ptor}. We first prove it for $r=2$. We return to \eqref{eq:X} and write
$$
\prod_{k=0}^{p-1} (X-k)=X^p-X+pH_1(X),
$$
where $H_1(X)\in {\mathbb Z}[X]$. Changing $X$ to $X-p\ell$ for $\ell=0,1,\ldots,p-1$, we get that
$$
\prod_{k=0}^{p-1} (X-(p\ell+k))=(X-p\ell)^p-(X-p\ell)+pH(X-p\ell)\equiv (X^p-X-pH(X))-p\ell\pmod {p^2}.
$$
In the above, we used the fact that $H(X-p\ell)\equiv H(X)\pmod p$. Thus,
\begin{eqnarray*}
\prod_{k=0}^{p^2-1} (X-k) & = & \prod_{\ell=0}^{p-1} \prod_{k=0}^{p-1} (X-(p\ell+k))\\
 & \equiv & \prod_{k=0}^{p-1} ((X^p-X-pH(X))-p\ell)\pmod {p^2}\\
 & \equiv & (X^p-X-pH(X))^p-(X^p-X-pH(X))^{p-1} p\left(\sum_{\ell=0}^{p-1} \ell\right)\pmod {p^2}\\
 & \equiv & (X^p-X)^p-(X^p-X-pH(X))^{p-1} p\left(\frac{p(p-1)}{2}\right)\pmod {p^2}\\
 & \equiv & (X^p-X)^p\pmod {p^2}.
 \end{eqnarray*}
 In the above, we used the fact that $p$ is odd so $p(p-1)/2$ is a multiple of $p$. This proves \eqref{eq:ptor} for $r=2$. Assuming $r\ge 2$ and that \eqref{eq:ptor} holds for $p^r$, we get that for all $\ell\ge 0$, we have
\begin{eqnarray*}
 \prod_{k=0}^{p^r-1} (X-(p^r\ell+k)) & \equiv & ((X-p^r\ell)^p-(X-p^r\ell))^{p^{r-1}}+p^rH_r(X-p^r\ell)\pmod {p^{r+1}}\\
 & \equiv & (X^p-X)^{p^{r-1}}+p^rH_r(X)\pmod {p^{r+1}},
 \end{eqnarray*}
 where $H_r(X)\in {\mathbb Z}[X]$. Thus, 
 \begin{eqnarray*}
 \prod_{k=0}^{p^{r+1}-1} (X-k) & = & \prod_{\ell=0}^p \prod_{k=0}^{p^r-1} (X-(p^r\ell+k))\\
 & \equiv & ((X^p-X)^{p^{r-1}}+p^r H_r(X))^p \pmod {p^{r+1}} \\
 & \equiv & (X^p-X)^{p^r}\pmod {p^{r+1}},
 \end{eqnarray*}
 which is what we wanted. 
Letting $p>2$ be congruent to $2,3\pmod 5$, we and evaluating the above in $\A$ and using that $\A^p\equiv \B\pmod p$, we get easily that
$$
\prod_{k=0}^{p^r-1} (\A-k)\equiv (X^p-X)^{p^{r-1}}\pmod {p^r}\equiv (\A^p-\A)^{p^{r-1}}\pmod {p^r}\equiv (\B-\A)^{p^{r-1}}\pmod {p^r}.
$$
This shows that
$$
\prod_{k=0}^{2p^r-1} (\A-k)\equiv (\B-\A)^{2p^{r-1}}\pmod {p^r}\equiv 5^{p^{r-1}}\pmod {p^r}.
$$
The same is true for $\B$ leading to 
$$
b_{n+2p^r}\equiv \frac{(-1)^{n+2p^r-1}}{{\sqrt{5}}} 5^{p^{r-1}} \left(\B\prod_{k=0}^{n-1}(\A-k)-\A\prod_{k=0}^{n-1} (\B-k)\right)\pmod {p^r}\equiv 5^{p^{r-1}} b_n\pmod {p^r}.
$$
Thus, applying this $k$ times we get
$$
b_{n+2p^r k} \equiv 5^{p^{r-1}k} b_n\pmod {p^r}.
$$
Taking $k=p-1$ and applying Euler's theorem $5^{p^{r-1}(p-1)}\equiv 1\pmod {p^r}$, we get that $b_{n+2p^{r}(p-1)}\equiv b_n\pmod {p^r}$. Thus, $T_{p^r}\mid 2p^r (p-1)$. As in c), we can replace 
$p-1$ by $\ord_p(5)$ and the divisibility holds.  

Finally, it remains to prove f) for $p=2$. Here, by inspection, we have 
$$
\prod_{k=0}^7 (X-k)\equiv (X^2-X)^4\pmod 4.
$$
 By induction on $r\ge 2$, one shows that
$$
\prod_{k=0}^{2^{r+1}-1}(X-k)\equiv (X^2-X)^{2^r}\pmod {2^r}.
$$
Evaluating this in $\A$, we get  
$$
\prod_{k=0}^{2^{r+1}-1} (\A-k)\equiv (\A^2-\B)^{2^r}\equiv 5^{2^{r-1}}\pmod {2^r}.
$$
The same holds for $\B$, so 
\begin{eqnarray*}
b_{n+2^{r+1}} & = & \frac{(-1)^{n+2^{r+1}-1}}{\sqrt{5}} 5^{2^{r-1}} \left(\B\prod_{k=0}^{n-1}(\A-k)-\A\prod_{k=0}^{n-1} (\B-k)\right)\pmod {2^r}\\
& \equiv & 5^{2^{r-1}} b_n\pmod {2^r}\equiv b_n\pmod {2^r}
\end{eqnarray*}
showing that $T_{2^r}\mid 2^{r+1}$ for all $r\ge 2$. 

\pagebreak
\section{Comments and generalizations}

Along the proof of our main result we showed that if $p\equiv 2,3\pmod 5$, then 
$$
b_{n+2p}\equiv 5b_{n}\pmod p.
$$
From here we deduced that $T_p\mid 2p(p-1)$ via the fact that $5^{p-1}\equiv 1\pmod p$. One may ask whether it can be the case that $T_{p^2}\mid 2p(p-1)$ for some prime $p$. 
Well, first of all, we will need that $5^{p-1}\equiv 1\pmod {p^2}$. This makes $p$ a base $5$ Wieferich prime. There is a conjecture that there are infinitely many such primes. The smallest known 
which is also congruent to $2,3\pmod 5$ is $40487$. However, the condition of condition of $p$ being base $5$ Wieferich is not sufficient. A close analysis of our arguments show that 
in addition to this condition, it should also hold that
$$
\prod_{k=0}^{2p-1}(\alpha-k)-5\equiv 0\pmod {p^2}\,,
$$
 and if this is the case then indeed $T_{p^2}\mid 2p(p-1)$. Since  the integer $(1/p)(\prod_{k=0}^{2p-1} (\alpha-k)-5)$ in ${\mathbb Z}[\A]$ should be the zero element in the finite field ${\mathbb Z}[\A]/p{\mathbb Z}[\A]$, with $p^2$ elements, it could be that the ``probability'' that this condition happens 
is $1/p^2$. By the same logic, the ``probability'' that $p$ is base $5$ Wieferich  should be $1/p$. Assuming these events to be independent, we could infer that the probability that both these conditions hold is $1/p^3$ and the series 
$$
\sum_{p\equiv 2,3\pmod 5} \frac{1}{p^3}
$$
is convergent, which seems to suggest, heuristically, that  there should be only finitely many primes $p\equiv 2,3\pmod 5$ such that $T_{p^2}\mid 2p(p-1)$.

Finally, our results apply to other sequences as well. More precisely, let $a,~b$ be integers and let $\alpha,~\beta$ be the roots of $x^2-ax-b$. Let 
$$
B_{a,b}=\frac{-\beta}{\alpha-\beta} (1-x)^{\alpha}+\frac{\alpha}{\alpha-\beta} (1-x)^{\beta}=\sum_{n\ge 0} b_n \frac{x^n}{n!}.
$$
The sequence $\{b_n\}_{n\ge 0}$ satisfies $b_0=1$, $b_1=0$, and, for $n\ge 0$
$$
b_{n+2}=(2n-a+1)b_{n+1}+(b+an-n^2)b_n.
$$
In case $\alpha$ and $\beta$ are rational (hence, integers), $B(x)$ is a rational function, so $b_n=n! u_n$, where $\{u_n\}_{n\ge 0}$ is binary recurrent with constant coefficients. It then follows that 
$b_n\equiv 0\pmod m$ for all $m$ provided $n>n_m$ is sufficiently large. Thus, $T_m=1$. In case $\alpha,\beta$ are irrational, then a similar result holds as for the case when $(a,b)=(1,1)$. Namely, 
$b_n\equiv 0\pmod m$ for all $n$ sufficiently large whenever $m$ is the product of odd primes $p$ for which ${\displaystyle{\left(\frac{\Delta}{p}\right)=0,1}}$, where $\Delta=a^2+4b$ is the discriminant of 
the quadratic $x^2-ax-b$. In case $p$ is odd and ${\displaystyle{\left(\frac{\Delta}{p}\right)=-1}}$, we have that $T_p\mid 2p(p-1)$ and $T_p$ is a multiple of $4$. Also, $T_{p^r}\mid 2p^r(p-1)$ 
for all $r\ge 1$ in this case. The proofs are similar. In the case of the prime $2$ one needs to distinguish cases according to the parities of $a,b$. For example, if $a$ and $b$ are odd,
then $\Delta\equiv 5\pmod 8$, so $2$ is not a quadratic residue modulo $\Delta$, so $T_{2^r}\mid 2^{r+1}$ for all $r\ge 1$, whereas if $a$ is odd and $b$ is even then $T_2=1$. 
This concludes our analysis of the periodicity of such P-recursive sequences ${\rm mod\ } m$.

\bigskip
{\bf Acknowledgments:}
This work was initiated when the second author was invited professor at the Paris Nord University, in April 2018. 
In addition, Florian Luca was supported in part by grant CPRR160325161141 and an A-rated scientist award 
both from the NRF of South Africa and by grant no.~17-02804S of the Czech Granting Agency.  
\noindent
\vspace{-3mm}

\let\OLDthebibliography\thebibliography
\renewcommand\thebibliography[1]{
  \OLDthebibliography{#1}
  \setlength{\parskip}{0pt}
  \setlength{\itemsep}{3pt}
}

\bibliographystyle{cyrbiburl}
\linespread{.94}\selectfont

\end{document}